\newtheorem{theorem}{\bf Theorem}[section]
\newtheorem{cor}[theorem]{\bf Corollary}
\newtheorem{proposition}[theorem]{\bf Proposition}
\newtheorem{nota}[theorem]{\bf Notation}
\newtheorem{remark}[theorem]{\bf Remark}
\newtheorem{defi}[theorem]{\bf Definition}
\newtheorem{conjecture}[theorem]{\bf Conjecture}
\title{Extending edge-colorings of distance-2 matchings in the hypercube}
\date{}
\author{Pál Bärnkopf \thanks{Alfréd Rényi Institute of Mathematics, Budapest, Hungary. Partially supported by the Counting in Sparse Graphs Lendület Research Group.
E-mail: {\tt barpal@student.elte.hu}}
}
\begin{document}

\maketitle

\begin{abstract}
Casselgren, Markstörm, and Pham conjectured that any precolored dis\-tan\-ce-2 matching in the $d$-dimensional cube $Q_d$ with at most $d$ colors can be extended to a proper $d$-edge-coloring. In this paper, we prove this conjecture and some related theorems. Especially, our result establishes that if $G$ is a bipartite graph, then a precolored distance-2 matching in the Cartesian product $H = G \mathbin{\Box} K_{2m}$ with at most $\chi'(H) = \Delta(H) = \Delta(G) + 2m - 1$ colors can be extended to an edge-coloring using at most $\chi'(H)$ colors. As another generalization, we establish a similar result for the Cartesian product $G \mathbin{\Box} K_{1,m}$.

\textit{Keywords: Precoloring extension; Edge-coloring; Bipartite graph; Cartesian product, Hypercube} 
\end{abstract}

\section{Introduction}

In this paper, we deal with proper edge-colorings of graphs. Throughout the paper, we often say just coloring instead of proper edge-coloring. The basic concepts not defined in the article can be found in the book \cite{diestel}. The edge chromatic number (or chromatic index) of a graph $G$ is denoted by $\chi'(G)$. According to Vizing's Theorem \cite{Vizing}, if $G$ is a simple graph with maximum degree $\Delta(G)$, then its chromatic index is either $\Delta(G)$ or $\Delta(G)+1$. For bipartite graphs, it is known that $\chi'(G) = \Delta(G)$. An (edge) \textit{precoloring} (or partial edge-coloring) of a graph $G$ is a proper edge-coloring of some edge set $E_0 \subseteq E(G)$. 

\begin{defi}
    \textup{We call a precoloring of some edge set $E_0$ \textit{extendable} if there is a proper edge-coloring with $\chi'(G)$ colors such that the colors of the edges in $E_0$ are the prescribed colors. Such a coloring is called an \textit{extension} of the precoloring.}
\end{defi}

%\begin{defi}
    %\textup{Let $G$ be a graph, and let us prescribe some color for some edges of $G$. We call a (proper edge) coloring of $G$ with $\chi'(G)$ colors \textit{admissible} if it is an extension of the precoloring.}
%\end{defi}

\begin{defi}
    {\upshape The \textit{Cartesian product} $G_1 \mathbin{\Box} G_2$ of graphs $G_1=(V_1,E_1)$ and $G_2=(V_2,E_2)$ is a graph whose vertex set is the Cartesian product $V(G_1) \times V(G_2)$ and two vertices $(u_1,u_2)$ and $(v_1,v_2)$ are adjacent in $G_1 \mathbin{\Box} G_2$ if and only if either
    \begin{itemize}
        \item $u_1=v_1$ and $u_2$ is adjacent to $v_2$ in $G_2$ or
        \item $u_2=v_2$ and $u_1$ is adjacent to $v_1$ in $G_1$.
    \end{itemize}
    }
\end{defi}

\begin{nota}
    $G^d$ denotes the $d$-th power of the Cartesian product of $G$ with itself.
\end{nota}

\begin{nota}
    The $d$-dimensional hypercube, denoted $Q_d$, is the $d$-th power of the Cartesian product of $K_2$ with itself, i.e. $Q_d=K_2^d$.
\end{nota}

\begin{defi}
    \textup{In a connected graph, the \textit{distance} $d(x,y)$ between two vertices $x$ and $y$ is the length of the shortest $x-y$ path and the \textit{distance} $d(e,f)$ of two edges $e=xy$ and $f=zw$ is $\min \{d(x,z),d(x,w),d(y,z),d(y,w) \}$.}
\end{defi}

\begin{defi}
    \textup{A matching $M$ is a \textit{distance-k matching}, if for every $m_1, m_2 \in M: d(m_1,m_2) \geq k$.}
\end{defi}

Completion of partial (edge-)colorings of graphs has a long history. For instance, completion of partial Latin squares can be interpreted as an edge-coloring extension problem restricted to complete bipartite graphs, and this has been studied since as early as 1960; see, e.g. \cite{Smetaniuk}. The first known publication explicitly deals with edge-coloring extensions is by Marcotte and Seymour~\cite{Marcotte}, who studied when a particular necessary condition for extendability of a partial edge-coloring is also sufficient. Since then, it has been shown that the problem of extending a given edge-precoloring is an NP-hard problem, even for 3-regular bipartite graphs \cite{Easton, Fiala}.

The problem of extending a partial edge-coloring with distance constraints on the precolored edges goes back to the work of Albertson and Moore~\cite{Albertson}. They conjectured that if some edges of a graph $G$ are colored using at most $\Delta(G) + 1$ colors such that the distance between any two precolored edges is at least $3$, then the precoloring can be extended using at most $\Delta(G) + 1$ colors. Later, Edwards et al.~\cite{Edwards} conjectured that distance at least $2$ should be enough.

\begin{conjecture} \label{Edwards}
    Suppose that $P \subset E(G)$ is such that the distance between any pair of edges in $P$ is at least $2$. Any $(\Delta+ 1)$-edge-coloring of $P$ extends to a $(\Delta+ 1)$-edge-coloring of all of $G$.
\end{conjecture}

Recently, extending (or avoiding) an edge-precoloring in hypercubes has become an active area of research; see, e.g. \cite{cassel-hyp-1, cassel-hyp-2, cassel-hyp-3}. However, an important difference is that they do not just study the above conjecture on special classes of graphs; rather, the question is typically when a given coloring can be extended to a $\chi'(G)$-edge-coloring. In the paper~\cite{cassel-hyp-2} the authors pose the following conjecture. (An induced matching is equivalent to a distance-2 matching.)

%An early problem regarding precoloring extension of edge-co\-lo\-rings can be found in Marcotte and Seymour \cite{Marcotte}. Note also that the completion of partial Latin squares can be interpreted as an edge-precoloring extension problem restricted to complete bipartite graphs, and this has been studied since as early as 1960, see, e.g. \cite{Smetaniuk}. We recall that the problem of extending a given edge-precoloring is an NP-hard problem, even for 3-regular bipartite graphs \cite{Easton}, \cite{Fiala}. A natural condition is that the precolored edges are far from each other. The problem of extending a partial edge-coloring with distance constraints on the precolored edges goes back to the work of Albertson and Moore \cite{Albertson}. Extending (or avoiding) an edge-precoloring in hypercubes is an actively researched area; several papers have been published on the topic; see, e.g. \cite{cassel-hyp-1}, \cite{cassel-hyp-2}, \cite{cassel-hyp-3}. In the paper \cite{cassel-hyp-2}, the authors raised the conjecture as follows. (An induced matching is equivalent to a distance-2 matching.)

\begin{conjecture} \label{hypercube} \cite{cassel-hyp-2}
    If $\varphi$ is an edge-precoloring of $Q_d$ using at most $d$ colors where all precolored edges lie in an induced matching, then $\varphi$ is extendable to a proper $d$‐edge-coloring.
\end{conjecture}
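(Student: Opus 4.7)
The plan is to prove Conjecture~\ref{hypercube} by induction on $d$, using the natural product decomposition $Q_d = Q_{d-1} \square K_2$. The base cases $d \leq 2$ are handled by direct inspection: any distance-$2$ matching in $Q_1$ or $Q_2$ has at most one edge, and a single precoloured edge is trivially extendable by a Kempe swap on a $4$-face from the natural $d$-edge colouring.

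For the inductive step, let $P$ be the perfect matching of direction-$d$ edges of $Q_d$, and decompose the precoloured matching $M = M^0 \sqcup M^1 \sqcup M^P$ according to whether each edge lies in the copy $Q^0$ of $Q_{d-1}$, in the other copy $Q^1$, or in $P$. The distance-$2$ hypothesis forces each $M^i$ to be an induced matching in $Q^i$, and forces the projections to $Q_{d-1}$ of the endpoints of $M^P$ to lie at pairwise Hamming distance at least $2$, and at Hamming distance at least $2$ from the endpoints of $M^0 \cup M^1$; these separation conditions will be crucial.

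The central step I would attempt is to single out one colour (say colour $d$) and construct a perfect matching $N$ of $Q_d$ that contains every precoloured edge of colour $d$ while being edge-disjoint from every precoloured edge of any other colour. Starting from $N := P$, the natural local modifications are: for each colour-$d$ precoloured edge $e$ lying in some direction $i \neq d$, re-route $N$ on the $(i,d)$-face through $e$ by replacing its two direction-$d$ edges with its two direction-$i$ edges; and for each precoloured edge lying on $P$ whose prescribed colour is $c \neq d$, re-route $N$ analogously on the $(c,d)$-face containing it. I would argue that the induced-matching hypothesis makes these $4$-face re-routings pairwise non-interfering, because two interfering $4$-faces would force the corresponding precoloured edges to lie within distance $1$ in $Q_d$; the verification is a small case analysis on how many direction-coordinates the two faces share.

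The hard part is what to do after $N$ has been coloured with colour $d$: the graph $Q_d \setminus N$ is $(d-1)$-regular bipartite, but because the re-routings break the product structure it is \emph{not} a disjoint union of two copies of $Q_{d-1}$, so the inductive hypothesis on $Q_{d-1}$ cannot be applied directly. To bypass this obstacle, I would prove in advance the stronger Cartesian-product theorem for $G \square K_2$ with bipartite $G$ announced in the abstract; the hypercube conjecture then follows by taking $G = Q_{d-1}$. The technical difficulty is therefore transferred to the general theorem, where the peeling strategy can be organised to respect the structure of the bipartite factor $G$, and one expects an iterative / Hall-type argument to propagate the remaining horizontal colour constraints through the copies of the second factor.
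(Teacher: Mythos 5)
Your reduction is the right one, and it matches the paper's: the conjecture is deduced from a precolouring-extension theorem for $G \square K_2$ with $G$ bipartite, applied with $G = Q_{d-1}$ (the paper proves the stronger statement for $G \square K_{2m}$ with $\Delta(G)+2m-1$ colours). But that product theorem is exactly the part your proposal does not prove. The whole first half of your argument (re-routing the direction-$d$ perfect matching through $4$-faces) is abandoned by your own diagnosis --- after one peel the product structure is gone, so the induction does not close --- and the replacement you offer, ``a peeling strategy organised to respect the structure of $G$'' with ``an iterative / Hall-type argument'', is a hope rather than a proof. Concretely, such a peeling faces real obstacles: a general bipartite $G$ need not be regular, so $G \square K_2$ has no decomposition into perfect matchings to peel; peeling colour classes one at a time gives no mechanism to keep the two copies of $G$ consistent at the vertices carrying precoloured vertical edges; and the distance-$2$ (rather than distance-$3$) hypothesis means a precoloured horizontal edge can sit next to another precoloured edge's ``shadow'' in the other copy, which is precisely the regime where naive inductive arguments on one copy at a time break down. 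So there is a genuine gap: the central theorem is invoked, not established.

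For comparison, the paper's proof of the product theorem avoids peeling and induction altogether. One sets up a single list-edge-colouring problem on (a subgraph of) one copy of $G$: every edge starts with the full list $\{1,\dots,\Delta(G)+2m-1\}$; each precoloured horizontal edge is deleted from $G$ and its colour is removed from the lists of adjacent edges; each precoloured vertical edge at a vertex $u$ has its colour removed from the lists of all edges at $u$. The distance-$2$ condition guarantees that any edge whose list shrinks below $\Delta(G)$ has both endpoints of degree at most $\Delta(G)-1$ in the reduced graph, so the theorem of Borodin, Kostochka, and Woodall (bipartite graphs are edge-choosable from lists of size $\max\{d(u),d(w)\}$) applies. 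The key trick is then to use this \emph{one} colouring identically in all $2m$ copies of $G$: the set of colours seen at $u$ is the same in every copy, so $2m-1$ common colours remain for the fibre $K_{2m}$ over $u$ (just one leftover colour when $m=1$), and prescribed vertical colours are available there by construction. If you want to salvage your write-up, proving the $G\square K_2$ case along these lines --- or by some genuinely different complete argument --- is the step you must supply; the $Q_{d-1}\square K_2$ reduction itself is fine.
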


%In the paper \cite{Edwards}, the authors raised the conjecture as follows.

%\begin{conjecture} \label{multi} \cite{Edwards}
    %Let $G$ be a multigraph with maximum degree $\Delta(G)$ and maximum multiplicity $\mu(G)$. Using the palette $\mathcal{K} = \{1, \dots, \Delta(G)+\mu(G)\}$, any precolored distance-2 matching can be extended to a proper edge-coloring of all of $G$.
%\end{conjecture}

In \cite{cassel-hyp-3}, Casselgren, Markström and Pham established the conjecture under the stronger assumption that the distance between any two precolored edges is at least three and also showed in \cite{cassel-hyp-2} that the conjecture is valid when all precolored edges are contained in at most two distinct dimensional matchings. Furthermore, results from \cite{west} imply that Conjecture~\ref{hypercube} holds in the special case where all precolored edges receive the same color. In the paper \cite{gyori}, Győri and the author prove the following theorem.

\begin{theorem} \cite{gyori}
    Let $G$ be a bipartite graph with maximum degree $\Delta(G)$. Suppose that the color of some edges of the Cartesian product $G \mathbin{\Box} K_{2}$ is prescribed using at most $\Delta(G)+1$ colors such that the distance between any two precolored edges is at least $3$. Then, this precoloring is extendable.
\end{theorem}

In this paper, we prove a strengthening of the statement, namely that it also holds for distance-2 matchings and $K_{2m}$.

\begin{theorem} \label{main}
    Let $G$ be a bipartite graph with maximum degree $\Delta(G)$ and $m \geq 1$ an integer. Suppose that the color of some edges of the Cartesian product $H = G \mathbin{\Box} K_{2m}$ is prescribed using at most $\chi'(H) = \Delta(G) + 2m - 1$ colors such that the distance between any two precolored edges is at least $2$. Then, this precoloring is extendable.
\end{theorem}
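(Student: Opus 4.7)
My plan is to take a standard ``product'' proper edge coloring of $G \square K_{2m}$ as a starting point, then apply local modifications to match the prescribed colors on the precolored edges, using the distance-$2$ hypothesis to ensure that these modifications do not conflict.

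First, I exploit the distance-$2$ condition to derive structural facts. For each vertex $u \in V(G)$, at most one precolored edge is ``attached'' to $u$, meaning either a $K_{2m}$-edge in the fiber at $u$, or a $G$-edge of the form $((u,i),(v,i))$---but not both. Indeed, two such edges at the same $u$ would share a vertex or have endpoints adjacent in the fiber at $u$. In particular, each $K_{2m}$-fiber contains at most one precolored edge. Moreover, within each level $i \in V(K_{2m})$, the precolored $G$-edges at level $i$ form an induced matching in the level-$i$ copy of $G$ (since the induced subgraph on level $i$ is an isometric copy of $G$).

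Next, I fix a proper $\Delta(G)$-edge coloring $\phi_G$ of $G$ (which exists by König's theorem, since $G$ is bipartite) using colors $\{1,\dots,\Delta(G)\}$ and, for each $u \in V(G)$, a proper $(2m-1)$-edge coloring $\psi_u$ of the $K_{2m}$-fiber at $u$ using colors $\{\Delta(G)+1,\dots,\Delta(G)+2m-1\}$. Setting $\Phi((u,i),(v,i))=\phi_G(uv)$ and $\Phi((u,i),(u,j))=\psi_u(ij)$ gives a proper edge coloring of $G \square K_{2m}$ with $\Delta(G)+2m-1$ colors. I then modify $\phi_G$ and the $\psi_u$'s so that $\Phi$ agrees with the precoloring. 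When the prescribed color of a precolored edge lies in its ``correct'' palette (a $G$-color for a $G$-edge, or a $K$-color for a $K_{2m}$-edge), the modification is local: perform a König-style recoloring inside $G$ to accommodate precolored $G$-edges, or replace $\psi_u$ by a different proper $(2m-1)$-edge coloring of $K_{2m}$. Since precolored $G$-edges project to an induced matching at each level, the resulting constraints on $\phi_G$ are consistent level by level.

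The main obstacle is handling ``cross-type'' precolored edges, where the prescribed color lies in the opposite palette. For a precolored $G$-edge $((u,i),(v,i))$ whose prescribed color $c$ is a $K$-color, I remove $c$ from the color sets $C_u$ and $C_v$ used by $\psi_u$ and $\psi_v$ and substitute a suitable $G$-color; symmetrically for a precolored $K_{2m}$-edge with a $G$-color. These substitutions alter the lists of colors available to $G$-edges incident to the modified fibers, and shrink list sizes at the affected edges. Using the distance-$2$ hypothesis, I would argue that two vertices of $G$ that both require a substitution can be adjacent in $G$ only in very restricted configurations, and that the resulting list edge coloring problem on each level-$i$ copy of $G$ still admits a solution compatible with the precoloring by invoking Galvin's theorem (or the degree-dependent refinement of Borodin-Kostochka-Woodall) for bipartite graphs. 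Establishing the necessary list-size bounds---particularly at edges where both endpoints carry non-default fiber palettes---is the technical heart of the proof, and is where the distance-$2$ structure of the precoloring must be used most carefully.
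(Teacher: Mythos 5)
Your proposal is a plan rather than a proof: the step you yourself call ``the technical heart'' (the list-size bounds after the cross-type substitutions) is never established, and the difficulties there are real, not routine. Two concrete problems. First, your palette split plus ``König-style recoloring inside $G$'' cannot accommodate the precolored $G$-edges in general: the union over \emph{all} levels of their projections to $G$ is only a matching, not an induced matching (two precolored $G$-edges on different levels whose projections are adjacent in $G$ are at distance $2$ in $G \square K_{2m}$, hence allowed), and prescribing colors on a non-induced matching need not extend within $\Delta(G)$ colors: for $G=C_4$ with vertices $a,b,c,d$, prescribe color $1$ on $(a,a_1)(b,a_1)$ and color $2$ on $(c,a_2)(d,a_2)$; these are at distance $2$, yet no proper coloring of $G$ with colors $\{1,2\}$ gives $ab$ color $1$ and $cd$ color $2$. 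So you must either borrow fiber colors (breaking the split) or recolor the levels independently. Second, once the level copies of $G$ are colored differently (your ``list edge coloring problem on each level-$i$ copy''), the leftover color sets at the $2m$ vertices of a fiber need not coincide, and finishing the fiber becomes a list edge coloring problem on $K_{2m}$ with lists of size about $2m-1$; Galvin and Borodin--Kostochka--Woodall do not apply there (the fiber is not bipartite), and what you would need is essentially the list edge coloring conjecture for complete graphs, which is not available. Your sketch never says how the fibers are completed in that situation.

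The paper avoids both problems by a different organization: it sets up a \emph{single} list edge coloring problem on one copy of $G$ --- delete the projections of precolored $G$-edges, and delete each prescribed color from the lists of edges adjacent to the corresponding projection or incident to a vertex carrying a precolored fiber edge --- applies Borodin--Kostochka--Woodall once, using the distance-$2$ condition to check that any edge whose list falls below $\Delta(G)$ (possible only for $m=1$) has both endpoint degrees reduced, and then uses this one coloring identically in every level. Consequently each fiber sees the same $2m-1$ leftover colors and is finished by an ordinary $(2m-1)$-edge coloring of $K_{2m}$, permuted so that its (at most one) precolored edge gets its prescribed color. If you want to salvage your approach, the missing idea is exactly this: keep the coloring of $G$ identical across all levels and exclude the prescribed colors from neighboring lists globally, rather than repairing level by level.
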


The special case of the statement where $G = Q_{d-1}$ and $m=1$ implies Conjecture \ref{hypercube}.%, while the statement also confirms Conjecture \ref{multi} for graphs that are the Cartesian product of $K_{2m}$ and an arbitrary bipartite graph. Moreover, in this case, the statement also holds using the palette $\mathcal{K} = \{1, \dots, \Delta(G)\}$.

The proof is based on the following result by Borodin, Kostochka, and Woodall:

\begin{theorem} \cite{Kostochka} \label{Kostochka}
     Let $G$ be a bipartite graph and set $f(e)=\max\{d(u),d(w)\}$ for each edge $e=uw$ in $G$. If we assign to each edge $e$ of $G$ a color list $L(e)$ of size at least $f(e)$, then $G$ has a proper edge-coloring in which the color of each edge $e$ belongs to its list $L(e)$.
\end{theorem}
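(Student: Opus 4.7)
The plan is to apply the kernel method to the line graph $L(G)$. The key tool is the Bondy--Boppana--Siegel lemma (used by Galvin in his proof of the Dinitz conjecture): if $D$ is an orientation of a graph $H$ in which every induced subdigraph has a kernel (an independent set that dominates all remaining vertices), then $H$ is $L$-choosable whenever $|L(v)|>d^+_D(v)$ for every vertex $v$. With $H=L(G)$ and the lists $L$ from the statement, it suffices to exhibit a kernel-perfect orientation $D$ of $L(G)$ in which every vertex $e=uw$ has out-degree at most $\max\{d(u),d(w)\}-1=f(e)-1$.

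\textbf{The orientation and kernel-perfectness.} Since $G$ is bipartite with parts $A,B$, K\"onig's theorem supplies a proper $\Delta(G)$-edge-colouring $c$. For two edges $e_1,e_2$ of $G$ sharing an endpoint $w$, I would orient the corresponding arc of $L(G)$ from $e_1$ to $e_2$ exactly when either $w\in A$ and $c(e_1)<c(e_2)$, or $w\in B$ and $c(e_1)>c(e_2)$; each clique of $L(G)$ is thereby turned into a transitive tournament. For any $F\subseteq E(G)$, a kernel of the induced subdigraph $D[F]$ is produced by a Gale--Shapley stable matching on the bipartite subgraph $(A\cup B,F)$, in which each $a\in A$ prefers incident $F$-edges of larger $c$-value and each $b\in B$ prefers incident $F$-edges of smaller $c$-value. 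A stable matching $M\subseteq F$ is a matching of $G$ and hence independent in $L(G)$, and the stability condition translates into the statement that every $e\in F\setminus M$ has a $D$-arc into some edge of $M$, so $M$ dominates.

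\textbf{Out-degree bound -- the main obstacle.} For $e=uw$ with $c(e)=i$, $u\in A$, $w\in B$, the out-degree equals $|\{e'\ni u:c(e')>i\}|+|\{e'\ni w:c(e')<i\}|$. If at every vertex $v$ the colours used at $v$ formed the initial segment $\{1,\ldots,d(v)\}$, these counts would equal $(d(u)-i)+(i-1)=d(u)-1\le \max\{d(u),d(w)\}-1$, yielding exactly the bound we need. Such a ``smooth'' colouring need not exist (already $K_{2,3}$ obstructs it), and this is the main obstacle. My plan is to preprocess $c$ by bichromatic Kempe swaps along alternating paths -- always safe in a bipartite graph -- to push large colours down at vertices of low degree; the residual defects are then absorbed by refining the orientation at the affected cliques so that ties caused by the defects are broken in the direction of the endpoint of larger degree. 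Showing that this can be done uniformly for every edge of $G$ is the core technical step and is precisely the refinement over Galvin's theorem obtained by Borodin--Kostochka--Woodall.
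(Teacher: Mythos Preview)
The paper does not contain a proof of this statement: Theorem~\ref{Kostochka} is quoted from Borodin--Kostochka--Woodall~\cite{Kostochka} and used as a black box in the proof of Theorem~\ref{main}. There is therefore nothing in the paper to compare your argument against.

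As for the proposal itself, the kernel/stable-matching framework you set up is correct and is exactly Galvin's argument; it yields kernel-perfectness of the orientation $D$ and, with it, the bound $|L(e)|\ge d^+_D(e)+1$ would suffice. The genuine gap is the one you flag yourself: for Galvin's orientation one only gets $d^+_D(e)\le \Delta(G)-1$, not $d^+_D(e)\le \max\{d(u),d(w)\}-1$. Your proposed repair --- Kempe swaps to push colours down at low-degree vertices, followed by ad hoc tie-breaking toward the higher-degree endpoint --- is not a proof: Kempe swaps along alternating paths fix one vertex at the cost of another, so there is no reason the defects can be eliminated (or even reduced) globally, and ``refining the orientation at the affected cliques'' is not made precise enough to see that kernel-perfectness survives or that the out-degree bound is actually achieved. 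In short, the hard content of~\cite{Kostochka} is precisely the step you defer, and the sketch does not supply it. If you want a clean route, the standard simplifications of~\cite{Kostochka} avoid Kempe chains entirely and instead build the orientation from a carefully chosen labelling (or proceed by induction on $|E(G)|$ removing an edge at a vertex of maximum degree), rather than trying to massage an arbitrary $\Delta$-edge-colouring after the fact.
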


\section{Proof of Theorem \ref{main}}

\begin{proof}[Proof of Theorem \ref{main}]
    Let $V(K_{2m})=\{a_1,\dots, a_{2m}\}$. To prove the theorem, we consider a list edge-coloring problem on a (sub)graph of $G$. Initially, assign the list of colors $\{1,2,...,\Delta(G)+2m-1\}$ to every edge of $G$. If an edge of the form $(u,a_i)(v,a_i)$ for some $i \in \{1, \dots, 2m \}$ has a prescribed color, then we remove the edge $uv$ from $G$ and delete the prescribed color from the lists of all edges adjacent to $uv$. Note that for every edge $uv$, there exists at most one $i$ such that $(u,a_i)(v,a_i)$ is precolored due to the distance condition. If there is a prescription on an edge of the form $(u,a_i)(u,a_j)$ for some $i \neq j$, then we delete the prescribed color from the list of all edges incident to $u$.
 
    The resulting graph $G'$ is bipartite and has maximum degree at most $\Delta(G)$. Moreover, we delete at most two colors from the color list of each edge, so if $m > 1$, then every color list contains at least $\Delta(G)$ colors. If the color list of an edge contains fewer than $\Delta(G)$ colors (which can only occur when $m=1$), then both of its endpoints are incident to a removed edge by the distance condition and hence each has degree at most $\Delta(G) - 1$. By Theorem~\ref{Kostochka}, $G'$ admits a proper list edge-coloring from these lists. This coloring of $E(G')$ can then be extended by incorporating the prescribed colors, yielding a coloring of $E(G)$ in all copies of $G$ within the Cartesian product. 

    Since the set of colors assigned to the edges incident to $u$ is the same in all copies of $G$, there are always $2m-1$ colors remaining to color the edges of the complete graph induced by the vertices $(u,a_i) \quad 1 \leq i \leq 2m$. This suffices, as it is known that $\chi'(K_{2m})=2m-1$. If one of the edges of this complete graph has a prescribed color, then that color must be among the remaining available colors, so we can ensure that the prescribed edge receives its prescribed color. Therefore, the final edge-coloring is proper and is an extension of the precoloring.
\end{proof}

\begin{proposition} \label{odd}
    Let $G$ be a bipartite graph with maximum degree $\Delta(G)$ and $m \geq 1$ an integer. Suppose that the color of some edges of the Cartesian product $H = G \mathbin{\Box} K_{2m+1}$ is prescribed using at most $\Delta(G)+\chi'(K_{2m+1}) = \Delta(G) + 2m + 1$ colors such that the distance between any two precolored edges is at least $2$. Then, this precoloring can be extended to an edge-coloring of $H$ using $\Delta(G)+2m+1$ colors.
\end{proposition}

\begin{remark}
    \textup{The above proof of Theorem~\ref{main} also works similarly for Proposition~\ref{odd}. This establishes Conjecture \ref{Edwards} for this particular class of graphs. However, there is no evidence that this result is sharp: since $\chi'(H) = \Delta(G) + 2m$, a more exciting question is whether the above statement remains true for graphs of the form $H = G \mathbin{\Box} K_{2m+1}$ when the use of at most $\Delta(G) + 2m$ colors is allowed. }
\end{remark}

%\begin{nota}
    %Let $Q_d$ denote the $d$-dimensional hypercube, that is, $\underbrace{K_2 \mathbin{\Box} \dots \mathbin{\Box} K_2}_{d}$.
%\end{nota}

Applying Theorem \ref{main} for $m=1$ successively $d$ times, we obtain the following statement.

\begin{cor} \label{hyp-thm}
    Let $G$ be a bipartite graph with maximum degree $\Delta(G)$ and $m \geq 1$ an integer. Suppose that the color of some edges of the Cartesian product $G \mathbin{\Box} Q_m$ is prescribed using at most $\Delta(G)+m$ colors such that the distance between any two precolored edges is at least $2$. Then, this precoloring is extendable.
\end{cor}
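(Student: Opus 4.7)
My plan is to prove the corollary by induction on $m$, using a single application of Theorem~\ref{main} at each step. The base case $m=1$ is immediate: since $Q_1 = K_2$, Theorem~\ref{main} with parameter $m=1$ applied to the bipartite graph $G$ directly produces an extension using $\Delta(G)+1 = \Delta(G)+m$ colors, and the distance-2 matching hypothesis is precisely what that theorem demands.

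For the inductive step (from $m-1$ to $m$), the key idea is to regroup the Cartesian product as
\[
 G \,\square\, Q_m \;=\; H \,\square\, K_2, \qquad \text{where } H := G \,\square\, Q_{m-1}.
\]
I would first record two standard facts that make Theorem~\ref{main} applicable to this regrouping: (i) $H$ is bipartite, since the Cartesian product of bipartite graphs is bipartite and both $G$ and $Q_{m-1}$ are bipartite; and (ii) $\Delta(H) = \Delta(G) + (m-1)$, since maximum degrees add under Cartesian products. The given precoloring of $G \square Q_m$ is, by hypothesis, a distance-2 matching using at most $\Delta(G) + m = \Delta(H) + 1$ colors, and distance is unaffected by the purely notational regrouping. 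Hence the hypotheses of Theorem~\ref{main}, applied with the bipartite graph $H$ and parameter $m=1$, are met, and a single invocation of that theorem yields the desired extension.

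Although phrased inductively, the argument is essentially one peel at the outermost $K_2$-factor: the induction hypothesis is never invoked, and the whole proof can equivalently be read as $m$ successive applications of Theorem~\ref{main} with $m=1$ (one per $K_2$-factor of $Q_m$), matching the author's phrasing. I do not anticipate any genuine obstacle; the only items worth a line of justification are the bipartiteness of $H$ and the invariance of the distance-2 matching condition under the associative regrouping of the Cartesian product, both of which are immediate from the definitions.
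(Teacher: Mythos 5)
Your proposal is correct and follows essentially the same route as the paper, which simply notes that the corollary follows by applying Theorem~\ref{main} with $m=1$ successively, once per $K_2$-factor of $Q_m$; your write-up just makes the regrouping $G \square Q_m = (G \square Q_{m-1}) \square K_2$, the bipartiteness of the factor, and the degree count explicit. As you yourself observe, the ``induction'' collapses to a single application of Theorem~\ref{main} to the bipartite graph $G \square Q_{m-1}$, which is a perfectly valid (and slightly cleaner) reading of the same argument.
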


\section{An analogous result for complete bipartite graphs}

Although establishing the case for $G \mathbin{\Box} K_2$ would be sufficient to prove Conjecture \ref{hypercube}, Theorem \ref{main} provides a broader generalization considering the Cartesian product $G \mathbin{\Box} K_{2m}$. However, since $K_2$ can also be viewed as the complete bipartite graph $K_{1,1}$, another natural direction for generalization would be to consider $G \mathbin{\Box} K_{n,n}$ or $G \mathbin{\Box} K_{n,m}$. This raises the question of whether the theorem remains valid under such an extension.

%For the proof of Conjecture \ref{hypercube}, Theorem \ref{main} is needed in the case where we take the Cartesian product of the bipartite graph $G$ with $K_2$. A possible generalization of this is to prove the statement for the Cartesian product with $K_{2m}$ whenever $m \geq 1$ (see Theorem \ref{main}). However, $K_2$ also has another very natural generalization: If we generalize the graph $K_2=K_{1,1}$ to $K_{n,n}$ or to $K_{n,m}$, does the theorem still hold?

\begin{conjecture} \label{general}
    Let $G$ be a bipartite graph with maximum degree $\Delta(G)$. Suppose that the color of some edges of the Cartesian product $G \mathbin{\Box} K_{n,m}$ $(n \leq m)$ is prescribed using at most $\Delta(G)+m$ colors such that the distance between any two precolored edges is at least $2$. Then, this precoloring is extendable.
\end{conjecture}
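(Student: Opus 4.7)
The plan is to mimic the proof of Theorem \ref{main}, with adjustments to handle the bipartite structure of $K_{n,m}$ in place of the complete graph $K_{2m}$. Write $V(K_{n,m}) = A \sqcup B$ with $|A|=n$ and $|B|=m$. As in Theorem \ref{main}, I would reduce to a list edge coloring problem on a subgraph $G' \subseteq G$: assign each edge of $G$ the full list $\{1,\dots,\Delta(G)+n\}$; for each precolored $G$-edge $(u,x)(v,x)$ remove $uv$ from $G$ and delete its prescribed color from the lists of all $G$-edges adjacent to $uv$; for each precolored $K_{n,m}$-edge at a vertex $u$ delete its color from the lists of all $G$-edges incident to $u$.

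The distance-$2$ condition gives the following structural facts at each $u \in V(G)$: all precolored $G$-edges with an endpoint above $u$ have their $K_{n,m}$-coordinate in a single part $A$ or $B$ (mixed coordinates would place two precolored edges at distance $1$ through the adjacency $a_i \sim b_j$ in $K_{n,m}$); at most one precolored $K_{n,m}$-edge sits at $u$; and the two types are mutually exclusive at $u$. Furthermore, if $uw \in E(G')$ and the coordinate sets $I_u^G, I_w^G \subseteq V(K_{n,m})$ of precolored $G$-edges at $u, w$ both lie in the same part of $K_{n,m}$, then $I_u^G \cap I_w^G = \emptyset$, since otherwise two precolored lifts $(u, a_i)(v_i, a_i)$ and $(w, a_i)(v_i'', a_i)$ would be at distance $1$. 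These facts let one lower-bound $|L(uw)|$ by $f(uw)$, so Theorem \ref{Kostochka} produces a proper list coloring $c'$ of $G'$.

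The extension of $c'$ to $G \square K_{n,m}$ proceeds in two further stages. In each copy of $G$ indexed by $x \in V(K_{n,m})$: extend $c'$ to a proper edge coloring of $G$ in that copy by combining it with the prescribed colors on precolored $G$-edges in copy $x$ and filling the remaining lifts of removed edges via a second application of a bipartite list edge coloring theorem. Finally, at each $u \in V(G)$, color the $K_{n,m}$ at $u$ with colors avoiding the $G$-edge colors at each $(u,x)$ and respecting any precolored $K_{n,m}$-edge at $u$, via Galvin's theorem (the list chromatic index equals the chromatic index for bipartite graphs).

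The main obstacle is the interaction between these extension stages. A removed edge $uv \in R$ may carry several distinct prescribed colors (one for each copy in which it is precolored), so the set $C_u(x)$ of $G$-edge colors at $(u,x)$ may vary with $x$, unlike in the $K_{2m}$ case where the single prescribed color could be used uniformly across copies. The resulting lists for $K_{n,m}$-edges at $u$ have size $\Delta(G) + n - |C_u(a_i) \cup C_u(b_j)|$, which must reach $n = \chi'(K_{n,m})$ for Galvin's theorem to apply. Moreover, the disjointness of the coordinate sets $I_u^G, I_w^G$ breaks down when they lie in different parts of $K_{n,m}$, so $|I_u^G| + |I_w^G|$ can exceed $n$ and Theorem \ref{Kostochka} may not directly apply in that case. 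Overcoming both issues --- by a coordinated design of the color assignments that aligns the sets $C_u(x)$ sufficiently across copies, possibly invoking Hall-type bipartite matching arguments --- is the technical heart of the proof.
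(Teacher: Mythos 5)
The statement you are attempting is stated in the paper as Conjecture~\ref{general} and is left \emph{open} there: the paper only verifies special cases, namely $n=m=1$ and $n=m=2$ (via Theorem~\ref{main}, using $K_{2,2}=K_2\square K_2$) and the star case $K_{1,m}$ (Theorem~\ref{star}), the latter by a short embedding argument --- $K_{1,m}$ is an induced subgraph of $Q_m$, so a precolored distance-2 matching in $G\square K_{1,m}$ sits inside $G\square Q_m$, Corollary~\ref{hyp-thm} extends it there, and restricting back gives the admissible coloring. So there is no full proof in the paper to compare against, and your proposal is not one either: it is a plan whose decisive steps are explicitly unresolved.

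The gap you flag is genuine and is exactly where the method of Theorem~\ref{main} breaks. In the $K_{2m}$ case all copies of $G$ are pairwise at distance $1$, so each $G$-edge $uv$ has at most one precolored lift; this single fact is what makes the lists in $G'$ lose at most two colors (with the degree compensation when $m=1$), and what makes the set of colors seen at $(u,a_i)$ identical for all $i$, leaving $2m-1$ colors for the fiber $K_{2m}$. For $K_{n,m}$ with $n\ge 2$, the lifts $(u,a_i)(v,a_i)$ and $(u,a_j)(v,a_j)$ with $a_i,a_j$ in the same part are at distance $2$, so one edge $uv$ may carry several prescribed colors; consequently the lists on $G'$ can drop below the Borodin--Kostochka--Woodall threshold $f(e)=\max\{d(u),d(w)\}$ (as you note, disjointness of $I_u^G$ and $I_w^G$ fails when they lie in different parts), and even if a list coloring of $G'$ exists, the color sets $C_u(x)$ at the vertices $(u,x)$ need not agree across copies, so the induced lists on the fiber $K_{n,m}$ at $u$ need not reach size $n=\chi'(K_{n,m})$ and Galvin's theorem cannot be invoked. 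The ``coordinated design of the color assignments'' you appeal to is precisely the missing mathematical content, not a technicality. If you want a statement you can actually prove with current tools, note that the only nontrivial case settled in the paper, $K_{1,m}$, is obtained not by adapting the list-coloring argument but by the embedding into $G\square Q_m$, which sidesteps all of these alignment problems; your direct approach does not even recover that case without closing the gap above.
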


\begin{remark}
    \textup{Theorem \ref{main} shows that Conjecture \ref{general} is true for the case $n=m=1$, and, since $K_{2,2}=K_2 \mathbin{\Box} K_2$, also in the case $n=m=2$.}
\end{remark}

In the following theorem we prove Conjecture \ref{general} for stars.

\begin{theorem} \label{star}
    Let $G$ be a bipartite graph with maximum degree $\Delta(G)$. Suppose that the color of some edges of the Cartesian product $G \mathbin{\Box} K_{1,m}$ is prescribed using at most $\Delta(G)+m$ colors such that the distance between any two precolored edges is at least $2$. Then, this precoloring is extendable.
\end{theorem}

\begin{proof} [Proof of Theorem \ref{star}]
    Note that $K_{1,m}$ is an induced subgraph of the hypercube $Q_m$, therefore, for any graph $G$, $G \mathbin{\Box} K_{1,m}$ is an induced subgraph of the Cartesian product $G \mathbin{\Box} Q_m$. Fix $G \mathbin{\Box} K_{1,m}$ as an induced subgraph of $G \mathbin{\Box} Q_m$. Then any distance-2 matching in $G \mathbin{\Box} K_{1,m}$ is also a distance-2 matching in $G \mathbin{\Box} Q_m$. 
    
    Consider the precoloring of the graph $G \mathbin{\Box} K_{1,m}$ and embed it into $G \mathbin{\Box} Q_m$. According to Corollary \ref{hyp-thm}, this precoloring is extendable and the restriction of this extension to $G \mathbin{\Box} K_{1,m}$ yields an extension of the precoloring to $G \mathbin{\Box} K_{1,m}$.
\end{proof}

It is quite easy to construct graphs $G$ and $H$ even, both regular and bipartite, such that not every precolored distance-2 matching in $G \mathbin{\Box} H$ is extendable (see: \cite{cassel-cart}). However, it is not possible that there exists a family of bipartite graphs such that if $H$ belongs to
this family, then for any bipartite graph $G$, there is a precolored distance-2 matching in $G \mathbin{\Box} H$ that is not extendable. For example, if we choose $G$ to be a star, Theorem \ref{star} shows that this cannot be true. If we allow ourselves to restrict the choice of $G$ and $H$, then it is possible to formulate statements of this form. The following is an example.

%It is known that graphs $G$ and $H$ can be easily constructed (even regular and bipartite) such that not every precolored distance-2 matching in $G \mathbin{\Box} H$ is extendable (see: \cite{cassel-cart}). However, we cannot expect such a negative characterization that stating that there exists a family of bipartite graphs such that if $H$ belongs to this family, then for any bipartite graph $G$, a precolored distance-2 matching in $G \mathbin{\Box} H$ is not extendable. For example, if we choose $G$ to be a star, Theorem \ref{star} shows that this cannot hold. If constraints can be imposed on both graphs, then it becomes possible to make such observations. For example, we can state the following.

\begin{proposition} \label{negative}
    If $G$ and $H$ are bipartite graphs such that for both there exists a vertex of maximum degree whose neighborhood can be covered by an induced matching, then there exists a precolored distance-2 matching in $G \mathbin{\Box} H$ that is not extendable.
\end{proposition}

\begin{remark}
    \textup{For example, if both $G$ and $H$ are trees each with a vertex of maximum degree which is not adjacent to any leaf, then there exists a precolored distance-2 matching in $G \mathbin{\Box} H$ that is not extendable.}
\end{remark}

\begin{proof}[Proof of Proposition \ref{negative}]
    Let $a \in V(G)$ and $b \in V(H)$ be two vertices that satisfy the conditions of the statement. Furthermore, let $c_1d_1, \dots, c_id_i \in E(G)$ and $k_1m_1, \dots, k_jm_j \in E(H)$ be independent edge sets that cover the neighborhoods of $a$ and $b$, respectively, and such that the subgraphs induced by their endpoints are matchings. Precolor the edges $(c_1,b)(d_1,b), \dots, (c_i,b)(d_i,b), (a,k_1)(a,m_1) ,\dots, (a,k_j)$ $(a,m_j)$ in $G \mathbin{\Box} H$ with the same color. These edges form a distance-2 matching, according to the given conditions, and this precoloring is not extendable. In fact, $(a,b)$ is a maximum degree vertex in $G \mathbin{\Box} H$, so every color must appear on the edges adjacent to $(a,b)$. However, each of these edges is incident with a precolored edge, and thus none of them can receive the preassigned color.
\end{proof}

\section*{Statements and declarations}
The author declares that they have no conflict of interest.
Data sharing is not applicable to this article, as no datasets were generated or analyzed during the current study.

\textbf{Acknowledgement.} I am grateful to Ervin Győri for his useful observations and comments and for his support during the preparation of this paper. I also thank the reviewers for their thorough work. Their detailed and careful feedback not only improved the quality of the paper and made it more accessible, but also contributed to my own development.


\begin{thebibliography}{99}
\bibitem{Albertson} M. O. Albertson, E. H. Moore, Extending graph colorings using no extra colors. Discrete Mathematics, 234(1-3) (2001)

\bibitem{diestel} R. Diestel, Graph Theory. Springer Nature. (2025)

\bibitem{gyori} P. Bärnkopf, E. Győri, Extending edge colorings of distance-3 matchings in the Cartesian product of bipartite graphs. arXiv preprint arXiv:2310.09973 (2023)

\bibitem{cassel-cart} C.J. Casselgren, J.B. Granholm, F.B. Petros, Extending partial edge colorings of iterated cartesian products of cycles and paths. Discrete Mathematics \& Theoretical Computer Science, 26(2) (2024)

\bibitem{cassel-hyp-1} C.J. Casselgren, P. Johansson, K. Markström, Avoiding and extending partial edge colorings of hypercubes. Graphs and Combinatorics, 38(3) (2022)

\bibitem{cassel-hyp-2} C.J. Casselgren, K. Markström, L.A. Pham, Edge precoloring extension of hypercubes. Journal of Graph Theory, 95(3) (2020)

\bibitem{cassel-hyp-3} C.J. Casselgren, K. Markström, L.A. Pham, Restricted extension of sparse partial edge colorings of hypercubes. Discrete Mathematics, 343(11) (2020)

\bibitem{Easton} T. Easton, R. G. Parker, On completing Latin squares. Discrete Applied Mathematics, 113(2-3) (2001)

\bibitem{Edwards} K. Edwards, A. Gir\~{a}o, J. van den Heuvel, R. J. Kang, G. J. Puleo, J.-S. Sereni, Extension from precoloured sets of edges. Electronic Journal of Combinatorics, P3-1 (2018)

\bibitem{Fiala} J. Fiala, NP-completeness of the edge precoloring extension problem on bipartite graphs. Journal of Graph Theory, 43(2) (2003)

\bibitem{Kostochka} O. V. Borodin, A. V. Kostochka, D. R. Woodall, List edge and list total colourings of multigraphs. Journal of combinatorial theory, Series B, 71(2) (1997)

\bibitem{Marcotte} O. Marcotte, P. Seymour, Extending an edge coloring. Journal of Graph Theory, 14(5) (1990)

\bibitem{Smetaniuk} B. Smetaniuk, A new construction on Latin squares. I. A proof of the Evans conjecture. Ars Combinatorica, 11 (1981)

\bibitem{west} J. Vandenbussche, D. B. West, Matching extendability in hypercubes. SIAM Journal on Discrete Mathematics, 23(3) (2009)

\bibitem{Vizing} V. G. Vizing, Critical graphs with given chromatic class. Методы Дискретного Анализа 5 (1965) (In Russian.)
\end{thebibliography}
\end{document}